\def\a{\alpha}
\def\f{\varphi}
\def\w{\omega}
\def\s{\sigma}
\newcommand{\C}{\mathbb{C}}
\newcommand{\R}{\mathbb{R}}
\newcommand{\Rd}{\mathbb{R}^d}
\newcommand{\Cd}{\mathbb{C}^d}
\newcommand{\Z}{\mathbb{Z}}
\newcommand{\mr}[1]{\mathrm{#1}}
\newcommand{\nrm}[1]{\left\|{#1}\right\|}
\newcommand{\set}[1]{\left\{#1\right\}\relax}
\newcommand{\scal}[1]{\left\langle\relax #1 \relax\right\rangle}
\newtheorem{thm}{Theorem}[section]
\newtheorem{defn}[thm]{Definition}
\newtheorem{prop}[thm]{Proposition}
\newtheorem{lem}[thm]{Lemma}
\newtheorem{cor}[thm]{Corollary}
\newtheorem{rem}[thm]{Remark}
\newtheorem{exa}[thm]{Example}
\newtheorem{ques}[thm]{Question}
\title{On root frames in $\Rd$}
\author{Mostafa Maslouhi and Kasso A.~Okoudjou}
\address{
	Mostafa Maslouhi
	National School of Applied Sciences (ENSAK)\\
	Ibn Tofail University. Kenitra 14000. Morocco.
}
\address{Kasso A.~Okoudjou, Department of Mathematics, Tufts University, Medford, MA 02155, USA
}
\email{mostafa.maslouhi@uit.ac.ma
}
\email{	kasso.okoudjou@tufts.edu
}
\date{\today }
\subjclass[2000]{42C15, 15A03}
\keywords{root system, finite frame, fusion frame, scalable frame}
\begin{document}
\maketitle

\begin{abstract} 	
\emph{A root frame} for $\mathbb{R}^d$ is a finite frame whose vectors form a root system. In this note we establish some elementary properties of this class of frames and prove that root frames constitute a subclass of scalable frames. In addition, we show that  root frames are examples of a larger class of frames called \emph{eigenframes}. 
\end{abstract}

\section{Introduction and preliminaries} 
\label{sec:introduction}
The goal of this note is to introduce and investigate the properties of a class of finite frames associated to root systems in $\R^d$. We begin with the following definition

\begin{defn}\label{def:rsys}
A subset $R \subset \Rd\setminus \{0\}$ is a root system provided that  
$$\s_\a(\beta)\in R,\quad \forall \a,\beta\in R,$$ 
where $\s_\a$ is the reflection defined by $$\s_\a(x):=x-2\scal{x,\a}\a/\nrm{\a}^2,\quad x\in\Rd.$$
In addition, $R$ is said to be a regular root system  if it does not contain a proper sub-root system.
\end{defn}

It is known that if $R$ is a root system, then   there exists $\beta\in\Rd$ such that $\scal{\alpha,\beta}\neq 0$ for all $\alpha\in R$. Therefore, the set 
$$R_{+}:=R_{+,\beta}=\set{\alpha\in R, \scal{\alpha,\beta}>0},$$ is called a positive subsystem of $R$. Note that $\#R=2\#R_{+,\beta}$ for all $\beta$. We refer to \cite{coxeter1973regular, Dunkl14, grove1996finite,humphreys1990reflection} for more details on root systems.

The class of finite frames we will investigate can now be defined as follows. 

\begin{defn}\label{def:root_frame} Let $R=\{\alpha_j\}_{j=1}^{2N}$ be a root system on $\R^d$. The collection  $\Phi_{R }=\left\{\a\right\}_{\a\in R_+}$ is called a root frame in $\mathbb{R}^d$ if $R_+$ spans $\Rd$. 
\end{defn}

\begin{rem}

\begin{enumerate}
    \item[(a)] While root systems have been the subject of many investigations, to the best of our knowledge, their spanning properties has received less attention. Nonetheless, we point out that a root frame in $\Rd$ as defined above is the same as  a rank $d$ root system defined in \cite{Dunkl14}, and as  an effective root system \cite[Proposition 4.1.2]{grove1996finite}.
    \item[(b)] Given a root system $R$, we let  $W:=W_R=\scal{\sigma_{\alpha}}$ be the (finite) group generated by  the reflections $\sigma_\alpha$ where $\alpha \in R$. $W$ is called the  Weyl (or Coxeter) group associated to $R$, and is a subgroup of the orthogonal group $O(d)$, \cite[Section 1.2]{humphreys1990reflection}. 
    \item[(c)] If $\Phi$ is a root frame for $\Rd$, then, $W_R \Phi=\Phi$. That is $W_R$ is a group of isometries leaving $\Phi$ invariant. We note that the symmetry group of (tight) frames was investigated in  \cite{ValWal2010}.
    \item[(d)] A parameter function associated to the root system $R$ is any function $k:R\to \C$ which is $W$-invariant. Given such a positive parameter function $k$ and a root system $R=\{\a_j\}_{j=1}^{2N}$, we could consider a root frame of the form $\Phi_{R, k}=\{\sqrt{k(\a)}\a\}_{\a\in R_{+}}$. As we will show in the sequel, this can be be viewed as scaling the length of each of the frame vector $\a$ and can be understood in the context of scalable frames \cite{Chenscalable, KOPT}.
\end{enumerate}
\end{rem}

In the rest of the note we establish some properties of root frames including their classification. In addition, we  prove that all root frames are scalable. Finally, we show that root frames constitute a sub-class of a family of frames we called eigenframes which can be viewed themselves as examples of  fusion frames. 

\section{Root frames and their frame operators in $\Rd$}\label{sec:root_frames}
In this section we  prove some basic properties of root frames focusing on their frame operators whose spectral properties seem quite unique compared to other frames.

\subsection{Elementary properties of root frames}\label{subsec2.1}
Let $\Phi_{R}$ be a root frame associated to the root system $R$. Without loss of generality, we assume  that $\Phi_{R}=\left\{\a\right\}_{\a\in R_+}$ is such that   the roots $\alpha$ are unit-norm vectors.  In addition, the frame operator associated to $\Phi_{R}$ will be denoted by $S_{R}$ and is given by  
\begin{equation}\label{eq:frame_op_expr}
S_{R}=  \sum_{\a\in R_+} \a \a^T
\end{equation}  where  $\a \a^T$ is the rank-one symmetric operator given by $$(\a \a^T)(x):=\langle x,\a \rangle \a, \ x \in \Rd.$$
When the root system $R$ is fixed, we will denote the corresponding frame operator by $S$ when there is no confusion. 

We next establish a number of properties of root frames. The first such result shows that the associated frame operator is independent of the choice of  the positive root. Consequently, we will also assume that for a root frame $\Phi_{R}=\left\{\a\right\}_{\a\in R_+}$, the positive roots system $R_+$ is fixed. 

\begin{prop}\label{pro:Independency_from_choice_beta}
Let $R$ be a root system and $R_{+,\beta_1}$, $R_{+,\beta_2}$ two associated positive root systems. Denote the corresponding frame operators by $S_{\beta_1}$ and $S_{\beta_2}$. Then $S_{\beta_1}=S_{\beta_2}$.
\end{prop}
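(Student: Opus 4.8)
The key observation is that any two positive subsystems $R_{+,\beta_1}$ and $R_{+,\beta_2}$ differ only by flipping the signs of some roots: for each $\alpha \in R$, exactly one of $\alpha, -\alpha$ lies in $R_{+,\beta_i}$. So $R_{+,\beta_2}$ is obtained from $R_{+,\beta_1}$ by replacing some subset of roots $\alpha$ by $-\alpha$. Since the rank-one operator $\alpha\alpha^T$ satisfies $(-\alpha)(-\alpha)^T = \alpha\alpha^T$, the sum in \eqref{eq:frame_op_expr} is manifestly unchanged. This is essentially immediate once the sign-flip description is in place.

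To make this precise I would first record the elementary fact (which follows from the definition of $R_{+,\beta}$ as $\{\alpha \in R : \scal{\alpha,\beta} > 0\}$ together with $\scal{-\alpha,\beta} = -\scal{\alpha,\beta}$ and the assumption $\scal{\alpha,\beta}\neq 0$ for all $\alpha\in R$) that $R = R_{+,\beta} \sqcup (-R_{+,\beta})$ is a disjoint union, and hence the map $\alpha \mapsto \pm\alpha$ gives a bijection between $R_{+,\beta_1}$ and $R_{+,\beta_2}$: concretely, define $\varepsilon_i : R_{+,\beta_1} \to R$ by $\varepsilon(\alpha) = \alpha$ if $\alpha \in R_{+,\beta_2}$ and $\varepsilon(\alpha) = -\alpha$ otherwise; then $\varepsilon$ maps $R_{+,\beta_1}$ bijectively onto $R_{+,\beta_2}$.

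Then I would simply compute
\[
S_{\beta_2} = \sum_{\gamma \in R_{+,\beta_2}} \gamma\gamma^T = \sum_{\alpha \in R_{+,\beta_1}} \varepsilon(\alpha)\varepsilon(\alpha)^T = \sum_{\alpha \in R_{+,\beta_1}} \alpha\alpha^T = S_{\beta_1},
\]
where the middle equality is the change of summation index along the bijection $\varepsilon$, and the third equality uses $(-\alpha)(-\alpha)^T = \alpha\alpha^T$, which is clear from $(-\alpha)(-\alpha)^T(x) = \scal{x,-\alpha}(-\alpha) = \scal{x,\alpha}\alpha = (\alpha\alpha^T)(x)$.

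There is really no substantial obstacle here; the only thing to be careful about is the justification that $R$ decomposes as the disjoint union of a positive subsystem and its negative — i.e., that $-R = R$ (which follows since $\sigma_\alpha(\alpha) = -\alpha \in R$ by the root system axiom) and that no $\alpha \in R$ is orthogonal to the chosen $\beta$, which is exactly the hypothesis under which $R_{+,\beta}$ was defined. Once that is in hand, the proof is a one-line reindexing argument. I would present it in three short steps: (1) $-R = R$; (2) $R = R_{+,\beta} \sqcup (-R_{+,\beta})$ and construction of the sign bijection $\varepsilon$; (3) the displayed computation.
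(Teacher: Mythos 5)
Your proof is correct and follows essentially the same route as the paper: the paper splits $S_{\beta_1}$ over $R_{+,\beta_1}\cap R_{+,\beta_2}$ and $R_{+,\beta_1}\setminus R_{+,\beta_2}$ and uses the bijection $\alpha\mapsto-\alpha$ onto $R_{+,\beta_2}\setminus R_{+,\beta_1}$, which is exactly your sign-flip map $\varepsilon$ packaged as two pieces. Your version just spells out the underlying facts ($-R=R$, the disjoint decomposition $R=R_{+,\beta}\sqcup(-R_{+,\beta})$, and $(-\alpha)(-\alpha)^T=\alpha\alpha^T$) in more detail than the paper does.
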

\begin{proof}
We start by observing that 
$$S_{\beta_1} = \sum_{\a\in R_{+,\beta_1}}\a \a^T
	= \sum_{\a\in R_{+,\beta_1}\cap R_{+,\beta_2}} \a \a^T+\sum_{\a\in R_{+,\beta_1}\setminus R_{+,\beta_2}} \a \a^T.$$

The result follows since the map $\alpha \to -\alpha$ is a  bijection from $R_{+,\beta_1}\setminus R_{+,\beta_2}$ onto 
$R_{+,\beta_2}\setminus R_{+,\beta_1}.$
\end{proof}

Our first main result deals with the spectral decomposition of the frame operator associated to a root frame. In particular, we show that  each of its vector is an eigenvector of the corresponding frame operator. To the best of or knowledge, except for tight frames, root frames seem to be the only class of frames with this property. 

\begin{thm}\label{thm:spectralRF}
Suppose that $\Phi_{R}=\{\a\}_{\a \in R_{+}}$ is a root frame for $\Rd$ with frame operator $S$. Then each frame vector $\alpha \in R_+$  is an eigenvector for $S$ with eigenvalue $\lambda_\a$ given by

 $$\begin{cases}
S\alpha =\lambda_{\alpha}\alpha\\
\lambda_\a:=\scal{S \a,\a}=\sum_{\beta\in R_+} \scal{\a,\beta}^2.
\end{cases}$$

	Consequently, the spectrum of $S$ is  $\set{\lambda_{\alpha},\, \alpha\in R_+}$ and $R_+$ contains a basis of $\Rd$ consisting of eigenvectors of $S$.

\end{thm}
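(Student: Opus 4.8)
The plan is to exploit the reflection symmetry that is built into the definition of a root system. First I would rewrite the frame operator symmetrically: since $\beta\beta^T=(-\beta)(-\beta)^T$ and $R=R_+\cup(-R_+)$ is a disjoint union (as $0\notin R$), we have $S=\sum_{\beta\in R_+}\beta\beta^T=\tfrac12\sum_{\beta\in R}\beta\beta^T$. The main step is then to show that $S$ commutes with every reflection $\sigma_\alpha$, $\alpha\in R$. This is immediate: $\sigma_\alpha$ is an orthogonal, self-adjoint involution, so $\sigma_\alpha(\beta\beta^T)\sigma_\alpha=(\sigma_\alpha\beta)(\sigma_\alpha\beta)^T$ for every $\beta$, and since the root system axiom forces $\sigma_\alpha$ to permute $R$, the sum $\tfrac12\sum_{\beta\in R}(\sigma_\alpha\beta)(\sigma_\alpha\beta)^T$ is merely a reindexing of $S$. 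Hence $\sigma_\alpha S=S\sigma_\alpha$. (This is also a special case of Remark~(c): $W_R$ leaves $\Phi_R$, and therefore $S$, invariant.)

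With $S\sigma_\alpha=\sigma_\alpha S$ in hand, I would invoke the elementary fact that the $(-1)$-eigenspace of $\sigma_\alpha$ is exactly the line $\R\alpha$ (its $(+1)$-eigenspace being the hyperplane $\alpha^\perp$). An operator commuting with $\sigma_\alpha$ preserves each of its eigenspaces, so $S(\R\alpha)\subseteq\R\alpha$; that is, $S\alpha=\lambda_\alpha\alpha$ for some scalar $\lambda_\alpha$. Taking the inner product with $\alpha$ and using $\nrm{\alpha}=1$ gives $\lambda_\alpha=\scal{S\alpha,\alpha}=\sum_{\beta\in R_+}\scal{\alpha,\beta}^2$, which is the claimed formula.

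For the last assertion, since $\Phi_R$ is a root frame, $R_+$ spans $\R^d$ and hence contains a basis $\{\alpha_1,\dots,\alpha_d\}$ of $\R^d$; by the above, each $\alpha_i$ is an eigenvector of $S$. Conversely, if $Sv=\mu v$ with $v\neq 0$, expanding $v=\sum_i c_i\alpha_i$ and comparing coefficients (the $\alpha_i$ being linearly independent) yields $\mu c_i=\lambda_{\alpha_i}c_i$ for all $i$, so $\mu=\lambda_{\alpha_i}$ for any index $i$ with $c_i\neq 0$. Combined with the fact that every $\lambda_\alpha$, $\alpha\in R_+$, is itself an eigenvalue, this gives $\operatorname{spec}(S)=\set{\lambda_\alpha:\alpha\in R_+}$, and the chosen basis is an eigenbasis for $S$.

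I do not anticipate a genuine obstacle; the only points needing a little care are the bookkeeping that lets us replace the sum over $R_+$ by the sum over $R$ (so that $\sigma_\alpha$ actually permutes the index set of the sum) and the linear-algebra step identifying $S$-invariance of the line $\R\alpha$ with $\alpha$ being an eigenvector. If one prefers to avoid the Weyl group altogether, the same conclusion follows by a direct computation: grouping the terms of $S\alpha=\tfrac12\sum_{\beta\in R}\scal{\alpha,\beta}\beta$ into pairs $\{\beta,\sigma_\alpha\beta\}$ and using $\scal{\alpha,\sigma_\alpha\beta}=-\scal{\alpha,\beta}$ together with $\sigma_\alpha\beta=\beta-2\scal{\beta,\alpha}\alpha$, each pair contributes a scalar multiple of $\alpha$, so the whole sum collapses to $\lambda_\alpha\alpha$ directly.
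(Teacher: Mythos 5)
Your proof is correct and follows essentially the same route as the paper: both rewrite $S=\tfrac12\sum_{\beta\in R}\beta\beta^T$ and derive everything from the commutation $\sigma_\alpha S=S\sigma_\alpha$ coming from $\sigma_\alpha$ permuting $R$. The only cosmetic difference is the last step — you use that $S$ preserves the one-dimensional $(-1)$-eigenspace $\R\alpha$ of $\sigma_\alpha$, while the paper deduces that $S$ commutes with $\alpha\alpha^T$ and reads off $S\alpha=\scal{S\alpha,\alpha}\alpha$ from the rank-one identity — which is an equivalent piece of linear algebra.
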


\begin{proof}
We first observe that the frame operator can be written as 
$$S= \frac{1}{2} \sum_{\beta \in R} \beta \beta^T.$$
In addition,  $S$ commutes with the action of $W_R$. That is for each $x\in \Rd$ and each $\a \in R$  $$ \s_\a (Sx)= S( \s_\a x).$$

It follows that for $\a \in R_+$
		$$\a \a^T( S)= S(\a \a^T) \iff \a (S \a)^T= (S\a) \a^T.$$ Because $S$ is symmetric  we get
		$S \a= \scal{S \a,\a}\a =\lambda_\a \a.$
\end{proof}

The following is a simple consequence of Theorem~\ref{thm:spectralRF}. 

\begin{cor}\label{co:R_i,+_is_root_syst} Suppose that $\Phi_{R}=\{\a\}_{\a \in R_{+}}$ is a root frame for $\Rd$ with corresponding frame operator $S$. Let $\{\lambda_i\}_{i=1}^r$ be the list of the distinct eigenvalues 
$\{\lambda_{\alpha}\}_{\alpha\in R_+}$ of $S$. For each $i=1, \hdots, r$ let 
$$\begin{cases}
R_{i}:=\{\a\in R_+:\,\lambda_\a =\lambda_i\}\\
E_i:=\mr{span}R_{i}
\end{cases}$$ with $d_i=\dim E_i$.

The following statements hold. 
\begin{enumerate}
\item[(a)]  $\{R_{i}\}_{i=1}^r$ is a collection of pairwise orthogonal and $W_R-$invariant sub-root systems.

\item[(b)] 	For all $i=1,\dots,r$, $E_i=\ker(S-\lambda_i)$ and $ \lambda_i\times d_i=\# R_{i, +}.$ 
\item[(c)]  If $A\leq B$ denote the optimal frame bounds of $\Phi_{R}$, then
	$$A\leq \frac{\#R_{+}}{d}\leq B.$$
\item[(d)] 	For each $i=1,\dots,r$, $\Phi_{R_i}:=\{\a\}_{\a\in R_{i,+}}$ is a tight frame for $E_i$ with $S_{R_{i,+}}=\lambda_i Id_{E_i}$. Furthermore, the root frame $\Phi_{R}=\{\a\}_{\a \in R_{+}}$ is tight if and only if the  root-system $R$ 
	is regular.
\end{enumerate}
\end{cor}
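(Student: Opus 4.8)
=== PROOF PROPOSAL FOR COROLLARY \ref{co:R_i,+_is_root_syst} ===

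The plan is to treat the four parts in order, leaning throughout on Theorem~\ref{thm:spectralRF} and on the $W_R$-invariance of $S$. For part~(a), I would first show each $R_i$ is a sub-root system: if $\a,\beta \in R_i$, then $\s_\a(\beta)\in R$ since $R$ is a root system, and $\s_\a(\beta)\in R_+$ or $-\s_\a(\beta)\in R_+$; because $S$ commutes with $\s_\a$ and $\s_\a$ is an isometry, $\s_\a(\beta)$ is again an eigenvector of $S$ with the same eigenvalue $\lambda_i$ (using $S\s_\a(\beta)=\s_\a(S\beta)=\lambda_i\s_\a(\beta)$), so it lies in $R_i\cup(-R_i)$, and since eigenspaces are symmetric under $x\mapsto -x$ this shows $R_i\cup(-R_i)$ is a root system; the same computation with $\s_\a$ replaced by an arbitrary element of $W_R$ gives $W_R$-invariance. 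Pairwise orthogonality of the $R_i$ is immediate: distinct $\lambda_i$ give orthogonal eigenspaces of the symmetric operator $S$, and $R_i\subset E_i$.

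For part~(b), the identity $E_i=\ker(S-\lambda_i)$ follows because $R_i$ spans $E_i$ by definition and every vector of $R_i$ lies in $\ker(S-\lambda_i)$, while conversely orthogonality of the $E_i$ together with $\bigoplus_i E_i=\Rd$ (since $R_+$ spans and every root lies in some $R_i$) forces each $E_i$ to be exactly the $\lambda_i$-eigenspace. For the trace identity, restrict $S$ to $E_i$: by part~(d) (proved next, or in parallel) $S$ maps $E_i$ into itself and acts there as $\lambda_i\,\mathrm{Id}_{E_i}$, so $\mathrm{tr}(S|_{E_i})=\lambda_i d_i$; on the other hand $\mathrm{tr}(S|_{E_i})=\sum_{\a\in R_{i,+}}\mathrm{tr}(\a\a^T|_{E_i})=\sum_{\a\in R_{i,+}}\nrm{\a}^2=\#R_{i,+}$ using that the roots are unit vectors and that $\a\a^T$ kills $E_i^\perp$ (which holds since $\a\in E_i$ and the $E_j$ are mutually orthogonal). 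Part~(c) is then the standard frame-bound estimate: $\mathrm{tr}(S)=\sum_{\a\in R_+}\nrm{\a}^2=\#R_+$, and since the eigenvalues of $S$ lie in $[A,B]$ with $S$ acting on a $d$-dimensional space, $A d\le \mathrm{tr}(S)\le B d$, giving $A\le \#R_+/d\le B$.

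For part~(d), the tightness of $\Phi_{R_i}$ on $E_i$: the frame operator of $\{\a\}_{\a\in R_{i,+}}$ as an operator on $E_i$ is $\sum_{\a\in R_{i,+}}\a\a^T$ restricted to $E_i$, and since each such $\a$ is a $\lambda_i$-eigenvector of the \emph{full} operator $S$ while the contributions $\sum_{\beta\in R_{j,+}}\beta\beta^T$ for $j\ne i$ vanish on $E_i$ (as $\beta\perp E_i$), we get $S_{R_{i,+}}=S|_{E_i}=\lambda_i\,\mathrm{Id}_{E_i}$. Finally, for the regularity equivalence: if $R$ is regular it contains no proper sub-root system, so by part~(a) there can be only one class $R_1=R_+$, hence one eigenvalue, hence $S=\lambda_1\,\mathrm{Id}$ and $\Phi_R$ is tight; conversely if $\Phi_R$ is tight then $S=\lambda\,\mathrm{Id}$, so all $\lambda_\a$ are equal, $r=1$, and $R$ has no proper sub-root system cutting it into eigen-pieces --- here I would need the slightly finer observation that \emph{any} sub-root system $R'\subsetneq R$ would have its span $W_R$-invariant and hence (by Theorem~\ref{thm:spectralRF} applied within, or by irreducibility considerations) force a nontrivial eigenspace decomposition, contradicting $r=1$. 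The main obstacle I anticipate is exactly this last direction: making rigorous that a tight root frame cannot contain a proper sub-root system requires arguing that a proper sub-root system would be $W_R$-invariant with proper span, and then that $S$ restricted to that span and its orthogonal complement would have (potentially) different scalings --- one must either invoke the classification of irreducible root systems or give a direct argument that the spanning complement of a $W_R$-invariant proper subspace is itself spanned by roots, so the $r=1$ hypothesis genuinely rules it out.
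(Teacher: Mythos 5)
Your treatment of (a), (b), (c) and of the tightness of each $\Phi_{R_i}$ on $E_i$ is correct and follows the same route as the paper, which simply declares (a) straightforward, obtains (b) by taking traces in $\lambda_i\,\mathrm{Id}_{E_i}=S|_{E_i}=\sum_{\alpha\in R_{i,+}}\alpha\alpha^T$, gets (c) from $Ad\le\mathrm{trace}(S)=\#R_+\le Bd$, and calls the first half of (d) trivial. The details you supply --- commutation of $S$ with $W_R$ to get invariance and the sub-root-system property, orthogonality of eigenspaces of the symmetric operator $S$, and the vanishing of $\beta\beta^T$ on $E_i$ for $\beta\in R_{j,+}$ with $j\ne i$ --- are exactly what the paper suppresses, and they are right.

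The gap you flagged in the last equivalence of (d) is real, and in fact it cannot be closed: with the paper's Definition~\ref{def:rsys}, the implication ``tight $\Rightarrow$ regular'' is false. Take $R=\{\pm e_1,\pm e_2\}\subset\R^2$: then $R_+=\{e_1,e_2\}$ is an orthonormal basis, $S=\mathrm{Id}$, and $\Phi_R$ is tight, yet $\{\pm e_1\}$ is a proper sub-root system, so $R$ is not regular. (Under the literal definition the situation is even worse: $\{\pm\alpha\}$ is a sub-root system of $R$ for \emph{every} $\alpha\in R$, so no root system with $\#R>2$ is regular.) The ``finer observation'' you were hoping for --- that any proper sub-root system forces a nontrivial eigenspace splitting --- is precisely what this example refutes; the spectrum of $S$ only detects decompositions of $R$ into mutually \emph{orthogonal} sub-root systems. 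What your argument actually establishes, and what is true, is the equivalence ``$\Phi_R$ is tight $\iff r=1$,'' i.e.\ $R$ admits no partition into two nonempty mutually orthogonal sub-root systems. The paper's own proof of this step is the single sentence ``It follows that\dots,'' so it offers no additional argument; your suspicion was well placed, and the fix is to replace ``regular'' by this orthogonal-indecomposability condition (or to change Definition~\ref{def:rsys} accordingly).
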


\begin{proof} Part (a) is straightforward and we omit its proof.

	\begin{enumerate}
	\item[(b)] The fact that  $\lambda_i \times d_i=\# R_{i, +}$ follows from taking the trace of the matrices in: $$\lambda_i Id_{E_i}=S_{|E_i}=\sum_{\alpha\in R_{i,+}}\alpha\otimes \alpha.$$

    \item[(c)] Given that $A,B$ are the optimal frame bounds, We see from the definition that 
	$$Ad\leq \mr{trace}(S)=\sum_{\a\in R_{+}}\nrm{\alpha}^2=\#R_{+}\leq Bd.$$

\item[(d)] The fact that $\Phi_{R_{i}}$ is a tight frame for its span $E_i$ is trivial. It follows that the root frame $\Phi_{R}$ is tight if and only if $R$ is a regular root system. 
	\end{enumerate}
\end{proof}

Another immediate consequence of the spectral properties of $S$ is the construction of Parseval frames starting with any   root system $R$. In fact, the next result shows that every root frame is scalable \cite{Chenscalable, CKLMNarayanS14, KOPT}. That is, we can always rescale each vector in a root frame to obtain a Parseval frame.

\begin{prop}\label{cor:Parseval_frame_R}
	The canonical dual  of the root frame $\Phi_{R}$ is the root frame generated by the same root system $R$ and given by
	 $$\tilde{\Phi}_{R}:=\bigg\{\sqrt{\frac{1}{\lambda_\a}}\a\bigg\}_{\a\in R_+}.$$
	
	Furthermore, $\tilde{\Phi}_{R}$ is a Parseval frame and $$\frac{1}{d}\sum_{\a\in R_+}\frac{1}{\lambda_\a}=1.$$
	
	Consequently, the root frame $\Phi_{R}$ is a scalable frame.
\end{prop}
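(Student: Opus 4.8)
The plan is to read off every assertion from the spectral decomposition of $S$ furnished by Theorem~\ref{thm:spectralRF}. Since $S$ is the frame operator of a frame for $\Rd$ it is positive definite, hence invertible with a well-defined positive square root; and by Theorem~\ref{thm:spectralRF} the set $R_+$ contains a basis of $\Rd$ consisting of eigenvectors of $S$, with $S\a=\lambda_\a\a$ and $\lambda_\a>0$ for every $\a\in R_+$. It follows that any function of $S$ acts on each such $\a$ through the corresponding eigenvalue; in particular $S^{-1}\a=\lambda_\a^{-1}\a$ and $S^{-1/2}\a=\lambda_\a^{-1/2}\a$. This identifies the canonical dual $S^{-1}\Phi_R$ and the canonical tight frame $S^{-1/2}\Phi_R=\tilde\Phi_R=\{\lambda_\a^{-1/2}\a\}_{\a\in R_+}$ as mere rescalings of the original root vectors; moreover the rescaled collection is again a root frame, because the eigenspaces $E_i$ are pairwise orthogonal and $\a\mapsto\lambda_\a$ is constant on each sub-root system $R_i$, so the reflections behave correctly (this uses Corollary~\ref{co:R_i,+_is_root_syst}(a)).

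To check that $\tilde\Phi_R$ is Parseval I would either cite the general fact that the frame operator of $S^{-1/2}\Phi_R$ equals $S^{-1/2}SS^{-1/2}=\mr{Id}$, or argue directly with the block structure: the frame operator of $\tilde\Phi_R$ is $\tilde S=\sum_{\a\in R_+}\lambda_\a^{-1}\a\a^T$, and for $\a\in R_{i,+}$ the orthogonality $E_i\perp E_j$ $(j\neq i)$ annihilates all cross terms, leaving $\tilde S\a=\lambda_i^{-1}S_{R_{i,+}}\a=\lambda_i^{-1}(\lambda_i\a)=\a$ by Corollary~\ref{co:R_i,+_is_root_syst}(d); since $R_+$ spans $\Rd$, this forces $\tilde S=\mr{Id}$. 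The trace identity is then immediate: the frame operator of any Parseval frame for $\Rd$ is $\mr{Id}_d$, so
$$d=\mr{trace}(\tilde S)=\sum_{\a\in R_+}\lambda_\a^{-1}\nrm{\a}^2=\sum_{\a\in R_+}\frac{1}{\lambda_\a},$$
the last step because the roots are unit vectors; dividing by $d$ gives the stated formula.

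Scalability is then automatic: by definition a frame is scalable precisely when one can multiply its vectors by nonnegative scalars to obtain a Parseval frame, and $\a\mapsto\lambda_\a^{-1/2}\a$ does exactly that. To connect this with the parametrized root frames $\Phi_{R,k}$ introduced in Section~\ref{sec:introduction}, I would also note that $\a\mapsto\lambda_\a$ is $W_R$-invariant, so $k(\a):=1/\lambda_\a$ is an admissible positive parameter function and $\tilde\Phi_R=\Phi_{R,k}$.

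There is no real obstacle here: the proposition is essentially a corollary of Theorem~\ref{thm:spectralRF} together with elementary frame theory. The only points needing mild care are terminological — keeping the canonical dual $S^{-1}\Phi_R=\{\lambda_\a^{-1}\a\}$, which is not Parseval unless $\Phi_R$ is already tight, distinct from the canonical tight frame $S^{-1/2}\Phi_R=\{\lambda_\a^{-1/2}\a\}$ appearing in the statement — and verifying that the rescaled family is not merely a frame but genuinely a root frame.
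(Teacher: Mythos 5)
Your proof is correct and follows essentially the same route as the paper: both rest on Theorem~\ref{thm:spectralRF} giving $S\a=\lambda_\a\a$, so that $S^{-1}$ and $S^{-1/2}$ act on each root by the reciprocal (respectively reciprocal square root) of its eigenvalue, from which the dual/Parseval claims and the trace identity follow at once. Your closing remark distinguishing the canonical dual $\{\lambda_\a^{-1}\a\}$ from the canonical tight frame $\{\lambda_\a^{-1/2}\a\}$ is a fair point of care that the paper's own statement and one-line proof gloss over.
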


\begin{proof}
Let $S$ be the frame operator for the the root frame $\Phi_{R}$. For $x\in \Rd$ we have 
$$Sx=\sum_{\alpha \in R_{+}}  \scal{x, \alpha}\alpha. $$
Thus $$x=\sum_{\alpha \in R_{+}}  \scal{x, \alpha}S^{-1}\alpha =\sum_{\alpha \in R_{+}} \tfrac{1}{\lambda_{\alpha}} \scal{x, \alpha}\alpha.$$

\end{proof}

\begin{rem} 
\begin{enumerate} 
\item[(a)] Because every root frame $\Phi_{R}$ is scalable, we conclude that the ellipsoid of minimum volume (also known as the L\"owner ellipsoid) that circumscribed the convex hull of $\{\pm \a\}_{\a \in R}$ is the unit ball \cite{Chenscalable}.
\item[(b)] An interesting question we have not been to settle is the characterization of scalable frames that are also root-frames. This reduces to proving that the group of generated by the reflections corresponding to the frame vectors is finite. See Theorem~\ref{thm:minimal_root_frame} for more. 
\end{enumerate}
\end{rem}

\subsection{Classification of root frames}\label{subsec3.2}
In this section we classify all the frames of unit-norm vectors $\Phi=\{\varphi_j\}_{j=1}^N \subset \Rd$ that are root frames. 

\begin{thm}\label{thm:minimal_root_frame}
Let $\Phi=\{\varphi_j\}_{j=1}^N$ be a frame for $ \R^d$ such that $\|\varphi_j\|=1$ for each $j$. Suppose that $C_{\Phi}$ is  the group generated by the reflections $\{\s_{\f_j}\}_{j=1}^N.$  Let 
$$ R_{\Phi}:=\set{g \varphi_j,\, g\in C_{\Phi}, j=1, \hdots, N}.$$
The set  $R_\Phi$ is a root frame if and only if the group $C_{\Phi}$  is finite. In this case, the initial frame $\Phi$ is contained in the  root frame $R_\Phi$. 
\end{thm}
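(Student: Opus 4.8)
The plan is to prove the two directions of the equivalence separately, and then the "containment" assertion will be essentially free.

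\medskip

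\noindent\textbf{The easy direction ($R_\Phi$ a root frame $\Rightarrow$ $C_\Phi$ finite).} If $R_\Phi$ is a root system, then by definition (and by Remark (b) in the introduction) its associated Weyl group $W_{R_\Phi}=\scal{\sigma_\alpha : \alpha\in R_\Phi}$ is finite, being a finite subgroup of $O(d)$. Since each generator $\sigma_{\varphi_j}$ of $C_\Phi$ is among the $\sigma_\alpha$ with $\alpha=\varphi_j\in R_\Phi$, we get $C_\Phi\subseteq W_{R_\Phi}$, hence $C_\Phi$ is finite. (One should also note $R_\Phi$ spans $\Rd$ because it contains $\Phi$, which is a frame; so the only content in "$R_\Phi$ is a root frame" beyond "$R_\Phi$ is a root system" is automatic here.)

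\medskip

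\noindent\textbf{The main direction ($C_\Phi$ finite $\Rightarrow$ $R_\Phi$ a root frame).} Assume $C_\Phi$ is finite. The set $R_\Phi$ is then finite, does not contain $0$ (all vectors are unit-norm), and by construction is $C_\Phi$-invariant. The key point is to verify the root-system axiom $\sigma_\alpha(\beta)\in R_\Phi$ for all $\alpha,\beta\in R_\Phi$. Write $\alpha=g\varphi_i$ and $\beta=h\varphi_j$ with $g,h\in C_\Phi$. The crucial observation is the conjugation identity for reflections: for any $g\in O(d)$ and any unit vector $v$, one has $g\,\sigma_v\,g^{-1}=\sigma_{gv}$. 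Applying this with $v=\varphi_i$ gives $\sigma_\alpha=\sigma_{g\varphi_i}=g\,\sigma_{\varphi_i}\,g^{-1}\in C_\Phi$ (using that $g\in C_\Phi$ and $C_\Phi$ is a group). Therefore $\sigma_\alpha(\beta)=\sigma_\alpha(h\varphi_j)=(\sigma_\alpha h)\varphi_j$ with $\sigma_\alpha h\in C_\Phi$, so $\sigma_\alpha(\beta)\in R_\Phi$ by the definition of $R_\Phi$. This shows $R_\Phi$ is a root system; since it contains the frame $\Phi$, it spans $\Rd$, so it is a root frame. Finally, taking $g=\mathrm{id}\in C_\Phi$ shows each $\varphi_j\in R_\Phi$, which is the last assertion.

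\medskip

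\noindent\textbf{Where the work is.} The argument is short once one has the conjugation identity $g\sigma_v g^{-1}=\sigma_{gv}$ for orthogonal $g$; I would state and prove it as a one-line lemma (it follows directly from the formula $\sigma_v(x)=x-2\scal{x,v}v$ together with $\scal{gx,gv}=\scal{x,v}$). The only genuine subtlety is bookkeeping: making sure that "$C_\Phi$ finite" is exactly what is needed so that $R_\Phi=C_\Phi\cdot\Phi$ is finite and so that $\sigma_\alpha\in C_\Phi$ for every $\alpha\in R_\Phi$ — i.e., that the set of reflections we can generate does not grow when we pass from $\Phi$ to $R_\Phi$. This is precisely guaranteed by the conjugation identity, which shows $\scal{\sigma_\alpha:\alpha\in R_\Phi}=C_\Phi$. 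I do not anticipate a hard obstacle; the finiteness hypothesis is doing all the heavy lifting, and the geometric input is elementary.
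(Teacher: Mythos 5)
Your proposal is correct and follows essentially the same route as the paper: the forward direction rests on the conjugation identity $g\sigma_v g^{-1}=\sigma_{gv}$ for $g\in O(d)$ (the paper writes it in the equivalent form $\sigma_{g_1\varphi_1}(g_2\varphi_2)=g_2\,\sigma_{g_2^{-1}g_1\varphi_1}(\varphi_2)$), and the converse is the same near-trivial finiteness observation. Your version is in fact slightly more careful, since you isolate the identity as a lemma and make explicit why $\sigma_\alpha\in C_\Phi$ for every $\alpha\in R_\Phi$, a point the paper's displayed computation uses implicitly.
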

\begin{proof} 	Suppose that $C_{\Phi}$ is a finite group. Clearly, $R_\Phi$ is a frame, since it can be written as the finite union of images of $\Phi$ under the reflections $g \in C_\Phi$. We only need to show that  $R_{\Phi}$ is a root-system in $\Rd$. Indeed,  let $\a_1=g_1\varphi_1,\a_2=g_2\varphi_2 \in R_{\Phi}$ where with $g_1, g_2\in C_{\Phi}$ and $\varphi_1, \varphi_1\in \Phi$. We have 
		$$\s_{\a_1}(\a_2)=\s_{g_1 \varphi_1}(g_2 \varphi_2)=g_2\s_{g_2^{-1}g_1\varphi_1}( \varphi_2)=h \varphi_2 \in R_\Phi$$ where $h=g_2\s_{g_2^{-1}g_1\varphi_1} \in C_{\Phi}$.

The converse is trivially proved since assuming that  $R_\Phi$ is a root frame implies that it is both a frame (hence a finite set) and a root system. 
\end{proof}

\begin{rem} 
 Recall that the spark of a frame $\Phi=\{\varphi_k\}_{k=1}^N \subset \R^d$ is the cardinality of the smallest linearly dependent subset of $\Phi$ \cite{ACM12}. If $\Phi$ is a root frame,  
then given $k\neq \ell$ there must exists $j\neq k, \ell$ such that $\s_{\varphi_{k}}(\varphi_{\ell})=\pm \varphi_j=\varphi_\ell - 2\scal{\varphi_\ell, \varphi_k}\varphi_k$. Thus $\{\varphi_j, \varphi_k, \varphi_\ell\}$ must be linearly dependent and hence the spark of the frame must be at most $3$. Consequently, if a frame $\Phi=\{\varphi_k\}_{k=1}^N \subset \R^d$ is such that every subset of three vectors is linearly independent, then the frame is not (contained in) a root frame. This is the case for any frame with spark greater or equal to $4$.  
\end{rem}

\subsection{Examples of root frames in $\mathbb{R}^d$}\label{sec:examples}

In this section we give some examples of root frames. 

\begin{exa}\label{example1}
\begin{enumerate}
    \item Let $\{e_1,\dots,e_d\}$ be an  orthonormal basis of $\R^d$. Then, $$R_+=\set{e_i-e_j,e_i+e_j, 1\leq i< j\leq d}\cup\set{e_i, i=1,\dots,d}$$ is a positive root system of rank $d$ called a type $B_{d}$ root system. 
	 
	By considering its frame operator, one can show that $R_+$ is a tight root-frame (of $d^2$ vectors) for   $\Rd$.
    \item Let $D_n$ be the dihedral group  of order 
	  $n$, $n\ge 2$. It is the group of symmetries of a regular convex polygon of $n$ vertices in the Euclidean plan $\R^2$. If we identify $z=x_1+i x_2\in \C$ with $z=(x_1,x_2)\in\R^2$ and set  $w=e^{i\pi/n}$, then the rotations in $D_{n}$ are the transformation $r_j:\, z\mapsto z w^{2j}$ and the reflections are given by $\s_j:\, z\mapsto \overline{z} w^{2j}$, $j=0,\dots, n-1$. It can be proved that  $R_+=\set{i\w^j,\, j=0,\dots,n-1}$ is a positive root-system that is also a tight root-frame for $\R^2$. Observe that this tight root frame for $\R2$ is related to the tight frames obtained by taking two rows from the $2n\times 2n$ DFT matrix.

	 \item The symmetric group $S_d$ operates on $\Rd$ by its action on the components on the canonical basis. That is, for  $x=(x_1,\dots,x_d)\in\Rd$ we have $$\s\,x=(x_{\s(1)},\dots,x_{\s(d)})\quad \forall \s\in S_d.$$
	 
	 Thus, the  transposition $(ij)$ plays the same  role as the  reflection $\s_{ij}$ defined by par $\s_{ij}(e_i-e_j)=- (e_i-e_j)$, where $\{e_1,..,e_d\}$ is the standard basis of $\Rd$. 
	 
	A positive sub root-system associated to $S_d$ is given by
	 $$
	 	R_+=\set{e_i-e_j,1\le i<j\le d}.
	 $$ One can show that  $0$ is an eigenvalue of the frame operator $S$ of $R_+$. Consequently, $R_+$ is an example of a positive root system that is not a root frame.
 
\end{enumerate}
\end{exa}

\section{Eigenframes (EF) }\label{sec:EF} We end this note with an extension of the notion of root frames. 
From Corollary~\ref{co:R_i,+_is_root_syst}, it follows that  any root frame $\Phi_R$ can be written as   $\Phi_{R}=\cup_{i=1}^r\Phi_{R_{i}}$ where $R_{\Phi_{i}}$ is a tight frame for its span, and where  $\{R_{\Phi_{i}}\}_{i=1}^r$ are mutually orthogonal. Consequently, a root frame is an example of fusion frame \cite{CaKu_FF13, casazza2008fusion}. In fact, we can introduce a class of frames for $\R^d$ (or $\C^d$) of which the root frames are examples, and which is a subclass of fusion frames. 

\begin{defn}
A frame $\Phi=\{\varphi_k\}_{k=1}^N\subset \Cd$ is called an \emph{eigenframe}(EF), provided that for each $k$, the vector $\varphi_k$ is an eigenvector of its frame operator $S:=S_\Phi=\sum_{k=1}^N \varphi_k\varphi_k^*$.
\end{defn}

Assume that $\Phi=\{\varphi_k\}_{k=1}^N$ is an EF for $\Cd$. Let $\{\lambda_k\}_{k=1}^r$ denote the set of distinct eigenvalues of $S$ with 

$$S\varphi_k=\lambda_k \varphi_k,\quad\quad  \forall k=1, \hdots, N.$$  Define 
\begin{equation}\label{eq:def_E_r}
	E_k:=\set{x\in\Rd,\ S x=\lambda_k x},
\end{equation}
and let $P_k$ denotes the orthogonal projection onto $E_k$. It is easy to see that for all $n$ we have	\begin{equation}\label{eq:project_expr}
	P_n=\frac{1}{\lambda_{n}} \sum_{\varphi_k\in E_{\lambda_{n}}}\varphi_k\varphi_k^*,
\end{equation}
where $E_{\lambda_{n}}=\Phi \cap E_{n}$.

It is straightforward to establish the following result which should be compared to Corollary~\ref{co:R_i,+_is_root_syst}. 

\begin{prop}\label{pro:gen_properties_EF} Let  $\Phi=\{\varphi_k\}_{k=1}^N$ be an EF for $\Cd$. The following statements hold.   

\begin{enumerate}
    \item[(a)] For each $n=1, 2, \hdots, r$ $$\lambda_{n}d_n= \sum_{\varphi_k\in E_{\lambda_{n}}}\nrm{\varphi_k}^2,$$ where $d_n=\dim E_n$.
    \item[(b)] For all  $u\in E_{\lambda_{n}} \cap \Phi$ we have 
 \begin{equation*}\label{eq:eig_val_bounds}
 \lambda_{n}\geq c_\Phi(u)\nrm{u}^2,
 \end{equation*}
where $c_\Phi(u)$ is the number of times $u$ appears in $\Phi$. In addition, equality holds in this inequality if and only if $\scal{u, \varphi_k}=0$ for all $u\neq \varphi_k \in \Phi$. 
\end{enumerate}
\end{prop}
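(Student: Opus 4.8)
The plan is to prove the two identities directly from the defining relations of an eigenframe, using the spectral decomposition of $S$ together with the projection formula \eqref{eq:project_expr}. For part (a), I would start from the observation that since $\Phi$ is an eigenframe, every frame vector $\varphi_k$ lies in exactly one eigenspace $E_n$, so the index set $\{1,\dots,N\}$ is partitioned as $\bigsqcup_{n=1}^r \{k : \varphi_k \in E_{\lambda_n}\}$. Restricting the frame operator $S = \sum_{k=1}^N \varphi_k\varphi_k^*$ to the invariant subspace $E_n$, the only summands that act nontrivially on $E_n$ (and keep their image inside $E_n$) are those with $\varphi_k \in E_{\lambda_n}$; the cross terms vanish because distinct eigenspaces of the self-adjoint operator $S$ are orthogonal. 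Hence $S|_{E_n} = \sum_{\varphi_k \in E_{\lambda_n}} \varphi_k\varphi_k^* = \lambda_n \, \mathrm{Id}_{E_n}$, which is exactly \eqref{eq:project_expr} rewritten. Taking the trace of both sides over $E_n$ then gives $\lambda_n d_n = \mathrm{trace}\big(\sum_{\varphi_k \in E_{\lambda_n}} \varphi_k\varphi_k^*\big) = \sum_{\varphi_k \in E_{\lambda_n}} \|\varphi_k\|^2$, using $\mathrm{trace}(\varphi\varphi^*) = \|\varphi\|^2$. This is identity (a), and it is a near-verbatim analogue of part (b) of Corollary~\ref{co:R_i,+_is_root_syst}.

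For part (b), fix $u \in E_{\lambda_n} \cap \Phi$ and compute $\lambda_n \|u\|^2 = \langle S u, u\rangle = \sum_{k=1}^N |\langle u, \varphi_k\rangle|^2$. I would isolate from this sum the contribution of those indices $k$ for which $\varphi_k = u$: each such term equals $\|u\|^4$, and there are $c_\Phi(u)$ of them by definition of the multiplicity. All remaining terms are nonnegative, so $\lambda_n\|u\|^2 \geq c_\Phi(u)\|u\|^4$, and dividing by $\|u\|^2 > 0$ yields $\lambda_n \geq c_\Phi(u)\|u\|^2$. The equality case is then immediate: equality forces every discarded term $|\langle u, \varphi_k\rangle|^2$ with $\varphi_k \neq u$ to vanish, i.e. $\langle u, \varphi_k\rangle = 0$ for all $\varphi_k \in \Phi$ with $\varphi_k \neq u$; conversely, if those inner products all vanish then the sum collapses to exactly the $c_\Phi(u)$ terms $\|u\|^4$ and equality holds. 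One should be slightly careful about what "$\varphi_k \neq u$" means when $\Phi$ has repeated vectors — the count $c_\Phi(u)$ should be read as the number of indices $k$ with $\varphi_k = u$, and the orthogonality condition as ranging over indices $k$ with $\varphi_k \neq u$ — but with that reading everything is consistent.

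The only mildly delicate point, and the one I would be most careful about, is justifying in part (a) that the cross terms $\varphi_k\varphi_k^*$ with $\varphi_k \notin E_{\lambda_n}$ contribute nothing to $S|_{E_n}$: this needs the fact that if $\varphi_k \in E_{\lambda_m}$ with $m \neq n$ then $E_{\lambda_m} \perp E_{\lambda_n}$, so $\langle x, \varphi_k\rangle = 0$ for $x \in E_n$. This is just the spectral theorem for the self-adjoint operator $S$, but it is worth stating explicitly rather than leaving implicit. Everything else is a routine trace computation and a one-line inequality, so I expect no real obstacle; the result is "straightforward to establish" precisely as the paper claims.
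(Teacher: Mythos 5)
Your proof is correct. The paper omits the argument entirely (``It is straightforward to establish the following result''), but part (a) of your proposal is exactly the trace computation the authors use for the analogous statement in Corollary~\ref{co:R_i,+_is_root_syst}(b), and your part (b) — expanding $\lambda_n\nrm{u}^2=\scal{Su,u}=\sum_k\abs{\scal{u,\varphi_k}}^2$ and isolating the $c_\Phi(u)$ terms equal to $\nrm{u}^4$ — is the intended one-line argument, with the equality case handled correctly.
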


The following result is an extension of Theorem~\ref{thm:spectralRF} characterizing EFs through their frame operator. We omit its simple  proof.

\begin{thm}\label{thm:charact}
	Let $\Phi=\{\varphi_k\}_{k=1}^N$ be a frame in $\Cd$. Then the following statements are equivalent
	\begin{enumerate}
		\item[(a)]\label{itm:f_EF} $\Phi$ is an EF,
		\item[(b)]\label{itm:f_commute_with_s_f} $S$ commutes with $\s_{\varphi_k}$ for all $k=1,\dots,N$,
		\item[(c)]\label{itm:f_commute_with_f_O_f} $S$ commutes with $\varphi_k\varphi_k^*=\scal{\cdot, \varphi_k}\varphi_k$ for all $k=1,\dots,N$.
		\item[(d)]\label{itm:constructing_EF} There exist mutually orthogonal subspaces $W_1$,$\dots$, $W_r$ of $\Cd$ such that $\Cd = \oplus_{i=1}^r W_i$ with $W_i =Span(\Phi_{i})$ where $\Phi_i$ is a tight frame in $W_i$ and $\Phi=\cup_{i=1}^r\Phi_i$.
	\end{enumerate}
\end{thm}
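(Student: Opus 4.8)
The plan is to prove the chain of equivalences (a) $\Leftrightarrow$ (b) $\Leftrightarrow$ (c) and then (c) $\Leftrightarrow$ (d), exploiting the fact that each of (b), (c) is a ``commutation'' statement and that (d) is precisely the spectral/orthogonal-decomposition reformulation of such commutation. The backbone is the elementary linear-algebra identity relating the reflection $\s_{\varphi_k}$ to the rank-one operator $\varphi_k\varphi_k^*$: writing $P_k = \varphi_k\varphi_k^*/\nrm{\varphi_k}^2$ for the orthogonal projection onto $\C\varphi_k$, one has $\s_{\varphi_k} = Id - 2P_k$. Hence an operator $S$ commutes with $\s_{\varphi_k}$ if and only if it commutes with $P_k$, i.e. if and only if it commutes with $\varphi_k\varphi_k^*$; this is the (b) $\Leftrightarrow$ (c) equivalence and it is purely formal.

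For (a) $\Leftrightarrow$ (c) I would argue as in the proof of Theorem~\ref{thm:spectralRF}. If $\varphi_k$ is an eigenvector of $S$, say $S\varphi_k=\lambda_k\varphi_k$, then for any $x$ one computes $S(\varphi_k\varphi_k^*)(x) = \scal{x,\varphi_k} S\varphi_k = \lambda_k \scal{x,\varphi_k}\varphi_k$, while $(\varphi_k\varphi_k^*)(Sx) = \scal{Sx,\varphi_k}\varphi_k = \scal{x,S\varphi_k}\varphi_k = \overline{\lambda_k}\scal{x,\varphi_k}\varphi_k$; since $S$ is positive and self-adjoint, $\lambda_k$ is real, so the two agree and $S$ commutes with $\varphi_k\varphi_k^*$. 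Conversely, if $S$ commutes with $\varphi_k\varphi_k^*$, then applying the commutation identity to $\varphi_k$ gives $S\varphi_k \cdot \nrm{\varphi_k}^2 = \scal{\varphi_k, \varphi_k} S\varphi_k$ on one side and $\scal{S\varphi_k, \varphi_k}\varphi_k$ on the other, hence $S\varphi_k = \frac{\scal{S\varphi_k,\varphi_k}}{\nrm{\varphi_k}^2}\varphi_k$ is a scalar multiple of $\varphi_k$, i.e. $\varphi_k$ is an eigenvector. (One must handle the harmless degeneracy $\varphi_k = 0$, but frame vectors are assumed nonzero, or the statement is vacuous on such a vector.)

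For (a) $\Rightarrow$ (d), I would group the frame vectors by eigenvalue exactly as in Corollary~\ref{co:R_i,+_is_root_syst}: set $\Phi_i = \{\varphi_k : S\varphi_k = \lambda_i\varphi_k\}$ and $W_i = \mr{span}\,\Phi_i \subseteq E_i = \ker(S-\lambda_i)$. Since distinct eigenspaces of the self-adjoint operator $S$ are orthogonal, the $W_i$ are mutually orthogonal; since $\{\varphi_k\}$ spans $\Cd$ and each $\varphi_k$ lies in some $W_i$, we get $\Cd = \oplus_i W_i$ (which also forces $W_i = E_i$). Restricting the frame-operator identity $S = \sum_k \varphi_k\varphi_k^*$ to $W_i$ and using orthogonality of the summands across different $W_j$, one reads off $S|_{W_i} = \sum_{\varphi_k\in\Phi_i}\varphi_k\varphi_k^* = \lambda_i\, Id_{W_i}$, so $\Phi_i$ is a $\lambda_i$-tight frame for $W_i$. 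For the converse (d) $\Rightarrow$ (a): if $\Phi = \cup_i \Phi_i$ with $\Phi_i$ tight (constant $\lambda_i$) for $W_i$ and the $W_i$ mutually orthogonal and spanning, then for $x\in W_j$ we have $Sx = \sum_i \big(\sum_{\varphi_k\in\Phi_i}\scal{x,\varphi_k}\varphi_k\big) = \sum_{\varphi_k\in\Phi_j}\scal{x,\varphi_k}\varphi_k = \lambda_j x$, since $\scal{x,\varphi_k}=0$ whenever $\varphi_k\in W_i$ with $i\neq j$; hence $S$ acts as $\lambda_j\, Id$ on $W_j$, every $\varphi_k\in\Phi_j\subset W_j$ is an eigenvector, and $\Phi$ is an EF.

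I do not expect a genuine obstacle here — each implication is a short computation — so the ``main obstacle'' is really just bookkeeping: making sure the self-adjointness and positivity of $S$ are invoked to guarantee real eigenvalues, that the spanning hypothesis is used to get the full direct-sum decomposition (not merely a proper subspace), and that the degenerate case of repeated or zero frame vectors does not break the rank-one commutation identity. Since the excerpt explicitly says the proof is omitted, a clean writeup organized as (a)$\Leftrightarrow$(b)$\Leftrightarrow$(c) via $\s_{\varphi_k}=Id-2P_k$, followed by (a)$\Leftrightarrow$(d) via eigenspace grouping, suffices.
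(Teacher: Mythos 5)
Your proof is correct; note that the paper explicitly omits the proof of this theorem, so there is nothing to compare against line by line. Your argument is exactly the intended one: the identity $\s_{\varphi_k}=Id-2\varphi_k\varphi_k^*/\nrm{\varphi_k}^2$ for (b)$\Leftrightarrow$(c), the rank-one commutation computation already used in the paper's proof of Theorem~\ref{thm:spectralRF} for (a)$\Leftrightarrow$(c), and the grouping of frame vectors by eigenvalue as in Corollary~\ref{co:R_i,+_is_root_syst} for (a)$\Leftrightarrow$(d), all check out.
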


\begin{rem}\label{pro:EF_scalable}
	\begin{enumerate}
	    \item[(a)]It is easy to extend  Proposition~\ref{cor:Parseval_frame_R} and to prove that any eigenframe $\Phi$ is scalable. Indeed, if  $\Phi=\{\varphi_k\}_{k=1}^N$  is an EF,   we have $S \varphi_k=\lambda_k \varphi_k$. It follows that  
	$$Id_{\Cd}= \sum_{k=1}^N \frac{1}{\lambda_k}\varphi_k\varphi_k^T,$$ which shows that $\{\frac{1}{\sqrt{\lambda_k}}\varphi_k\}_{k=1}^N$ is Parseval frame.
	\item[(b)] If $\Phi=\{\varphi_k\}_{k=1}^N$ is an EF, then its Gram matrix is block diagonal. 
		
	\end{enumerate}
\end{rem}

\section*{Acknowledgment}
M.Maslouhi would like to thank the Moroccan-American Commission For Educational \& Cultural Exchange (MACECE) for their grant and Tufts university for their support during his Fulbright project.

K.A.O. was  partially supported by a grant from   the National Science Foundation grant DMS 1814253.

\bibliographystyle{abbrv}
\bibliography{root-frames}

\end{document}